\def\11{\mathds{1}}
\def\ZZ{\mathbb{Z}}
\def\FF{\mathbb{F}}
\def\bfA{\mathit{A}}
\newcommand{\h}{{\rm height}}
\newcommand{\F}{\mathcal{F}}
\newcommand{\Fr}{\mathrm{Fr}}
\newcommand{\bfa}{{\boldsymbol{a}}}
\def\bfA{\mathbf{A}}
\def\disc{\Delta} 
\def\iff{\Leftrightarrow}
\newtheorem{thm}{Theorem}[section]
\newtheorem{prop}[thm]{Proposition}
\newtheorem{lemma}[thm]{Lemma}
\theoremstyle{definition}
\theoremstyle{remark}
\def\Gal{{\rm Gal{}}}
\def\deg{{\rm deg{}}}
\def\disc{{\rm disc}}
\def\lll{$L(X)=f(t)+g(t)\cdot X\, $}
\begin{document}

\title{
Prime polynomial values of linear functions in short intervals
}

\author{Efrat Bank
\thanks{School of Mathematical Sciences, Tel Aviv University, Ramat Aviv, Tel Aviv 69978, Israel, \href{mailto:efratban@post.tau.ac.il}{efratban@post.tau.ac.il}}
\and  Lior Bary-Soroker \thanks{School of Mathematical Sciences, Tel Aviv University, Ramat Aviv, Tel Aviv 69978, Israel, \href{mailto:barylior@post.tau.ac.il}{barylior@post.tau.ac.il}}
}
\date{\today}

\maketitle
\begin{abstract}

In this paper we establish a function field analogue of a  conjecture in number theory which is 
a combination of several famous conjectures, including the Hardy-Littlewood prime tuple conjecture, conjectures on the number of primes in arithmetic progressions and in short intervals, and the Goldbach conjecture. We prove  an asymptotic formula for the number of simultaneous prime values of $n$ linear functions, in the limit of a large finite field. 
\end{abstract}

\section{Introduction}
Recently, several function field analogues of problems in analytic number theory were solved in the limit of a large finite field, e.g.\ the Bateman-Horn conjecture \cite{Alexei}; the Goldbach conjecture \cite{Goldbach_Bender2008}; the Chowla conjecture \cite{Mobius_CarmonRudnick2014}; problems on variance of the number of primes in short intervals and in arithmetic progressions \cite{KeatingRudnick2012} and on covariance of almost primes \cite{Rodgers}.

Let us describe in more detail two classical problems in number theory and their resolutions in the function field case. The two problems we describe relate to the work of this paper. 
Let $\11$ be the prime characteristic function, i.e.,
\begin{equation}
\11(h) = \begin{cases} 
1,& h \text{  is prime}\\
0, &\text{otherwise}.
\end{cases}\label{eq:charfn}
\end{equation}
The first problem is of counting primes in short intervals. 
By the Prime Number Theorem, it is conjectured that if $I$ is an interval of length $x^{\epsilon}$, $\epsilon>0$, around large number $x$, then 
\begin{equation}\label{eq:si}
\sum_{h\in I} \11(h) \sim \int_I \frac{dt}{\log t} \sim \frac{x^{\epsilon}}{\log x}.
\end{equation} 
Let $\FF_q[t]$ be the ring  of polynomials  over the finite field $\FF_q$ with $q$ elements.
By abuse of notation, denote by $\11$ the analogue of \eqref{eq:charfn}, i.e., the characteristic function of prime polynomials (which are by definition monic irreducibles), and let $\|f\|=q^{\deg f}$, for $f\in \FF_q[t]$ (where  $\|0\|=0$). 
Rosenzweig and the authors \cite{PrimePoly_Liors2014} prove the following analogue of \eqref{eq:si}: Let $f_0\in \FF_q[t]$ be monic of degree $k$, $\frac{3}{k}\leq \epsilon <1$, and $I=I(f_0,\epsilon)=\{ f\in \FF_q[t] : \|f-f_0\| \leq \|f_0\|^\epsilon\}$; then   
\begin{equation}\label{eq:Poly_SI}
\sum_{f\in I} \11(f) = \frac{\#I}{k} (1+O_k(q^{-1/2})),
\end{equation}
where the implied constant depends only on $k$ and not on $f_0$ or $q$. 
To compare between \eqref{eq:si} and \eqref{eq:Poly_SI}, we replace $x^{\epsilon}$ with $\#I$, and $\log x$ with $k$.

The second problem is the Hardy-Littlewood prime tuple conjecture, which asserts that 
\begin{equation}\label{eq:HL}
\sum_{0<h\leq x} \11(h+a_1) \cdots \11(h+a_n) \sim \frak{S}(a_1, \ldots, a_n) \frac{x}{(\log x)^n}, 
\qquad x\to \infty,
\end{equation}
where 
\[
\frak{S}(a_1, \ldots, a_n) = \prod_{p}\frac{1-\nu(p)p^{-1}}{(1-p^{-1})^{n}},
\]
and $\nu(p) = \#\{ h\mod p : (h+a_1)\cdots (h+a_n) \equiv 0 \pmod p\}$. Note that $\frak{S}=0$ if and only if $\nu(p)=p$ for some $p$, which implies that $p$ divides $(h+a_1)\cdots (h+a_n)$ for all $h$. Bender-Pollack \cite{QuantitativeGoldbach_BenderPollack2009} in the case $n=2$ and the second author \cite{HL_Lior2014} in general, prove that for any fixed $k>0$ 
\[
\sum_{\substack{f\in \FF_q[t] \text{ monic}\\ \deg f=k}} \11(f+a_1) \cdots \11(f+a_n) = \frac{q^{k}}{k^n} (1+O_{k,n}(q^{-1/2})),
\]
uniformly on all $a_1, \ldots, a_n\in \FF_q[t]$ of degrees $\deg (a_i)<k$ and for odd $q$. Recently, Carmon \cite{Carmon_Char2} proved this also in characteristic $2$.

Let us consider a more general setting that includes short intervals and arithmetic progressions. Let 
\[
L(X) = bX + a, \qquad a,b\in \ZZ
\]
be a linear function. Assume that $L$ is primitive in the sense that $\gcd(a,b)=1$. A folklore conjecture (see Section~\ref{s_heuristics}) states that, 
\begin{equation}\label{PNT_AP}
\sum_{h\in[x,x+x^{\epsilon}]} \11(L(h))\sim 
\frac{b}{\phi(b)}\cdot\frac{x^\epsilon}{\log(L(x))}, \qquad x\to \infty,
\end{equation}
where $\phi$ is the Euler totient function, and $0<a<b,\, b^\delta <x$ or $b<0,\, |b|^{1+\delta}<a$ and $|b|x^\alpha<a<|b|x^\beta$ for $1<\alpha<\beta$. 

Now, let $L_i=b_i X+a_i$, $i=1,\ldots, n$ be distinct primitive linear functions. 
As in the heuristic derivation of the Hardy-Littlewood conjecture from the Prime Number Theorem, one may expect (cf.\ \cite[Page 10]{icmGranville})  from \eqref{PNT_AP} that in the same range of parameters,
\begin{equation}\label{eq:conj}
\sum_{h\in [x,x+x^{\epsilon}]}\11(L_1(h))\cdots \11(L_n(h))\sim \mathfrak{S}(L_1, \ldots, L_n) \frac{x^{\epsilon}}{\prod_{i=1}^{n}\log(L_i(x))}, \qquad x\to \infty,
\end{equation}
where,  
\[
\frak{S}(L_1, \ldots, L_n) = \prod_{p}\frac{1-\nu_{L_1, \ldots, L_n}(p)p^{-1}}{(1-p^{-1})^n}
,
\]
and  $\nu_{L_1, \ldots, L_n}(p)$ is  the number of solutions $h\in \ZZ/p\ZZ$ of $\prod_i L_i(h)\equiv 0\pmod p$. Note that, $\frak{S}(L) = \frac{b}{\phi(b)}$, so \eqref{eq:conj} reduces to \eqref{PNT_AP} if $n=1$. 
%
%

It is interesting to notice that if we take $L_1(x)=x$ and $L_2(x) = a-x$, then \eqref{eq:conj} would imply a quantitative Goldbach conjecture (for  all sufficiently large even $a\in \ZZ$) and if we take $L_i(X) = x+a_i$, we retrieve \eqref{eq:HL}.

The aim of this study is to prove the function field analogue of \eqref{eq:conj} in the limit of a large finite field. For a primitive linear function $L(X) = f(t) + g(t)\cdot X$ with $f,g\in \FF_q[t]$ and $g\neq 0$, we define the height to be: $\h(L)=\max\{\deg (f), \deg (g)\}$.
Our main result is the following,

\begin{thm}\label{thm:main}
Let $B>0$ and $1> \epsilon > 0$ be fixed real numbers. Then the asymptotic formula
\[
\sum_{f\in I(f_0,\epsilon)} \11(L_1(f)) \cdots \11(L_n(f)) = \frac{\# I(f_0,\epsilon)}{\prod_{i=1}^n \deg(L_i(f_0))} (1+O_{B}(q^{-1/2}))
\]
holds uniformly for all odd prime powers $q$, $1\leq n\leq B$, distinct primitive linear functions $L_1(X), \ldots, L_n(X)$  defined over $\FF_q[t]$ each of height at most $B$, and monic $f_0\in \FF_q[t]$ of degree in the interval $B\geq \deg  f_0 \geq \frac{2}{\epsilon}$.
\end{thm}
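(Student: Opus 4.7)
I would follow the Galois-theoretic framework developed in \cite{PrimePoly_Liors2014} (for short intervals) and \cite{HL_Lior2014} (for Hardy-Littlewood), viewing the short interval as the parameter space of a family of $n$ polynomials in $t$ and reducing the count to a function-field Chebotarev estimate. Set $k = \deg f_0$ and $m = \lfloor \epsilon k\rfloor$; the hypothesis $\deg f_0 \geq 2/\epsilon$ gives $m \geq 2$. Every $f \in I(f_0,\epsilon)$ is parameterized uniquely by $\bfA = (A_0,\ldots,A_m) \in \FF_q^{m+1}$ via $f = f_0 + h(\bfA, t)$, where $h(\bfA,t) = \sum_{j=0}^{m} A_j t^j$, so $\#I(f_0,\epsilon) = q^{m+1}$. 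Writing $L_i(X) = f_i(t) + g_i(t) X$, one has $L_i(f_0+h) = (f_i + g_i f_0) + g_i \cdot h$, a polynomial in $t$ of generic degree $d_i := \deg L_i(f_0)$ with coefficients in $\FF_q[\bfA]$. After dividing by its $t$-leading coefficient one obtains a monic $F_i(\bfA, t) \in \FF_q(\bfA)[t]$ of degree $d_i$, and, up to a Zariski-closed exceptional locus of $\bfa$ (contributing $O(q^m)$), the sum in question counts $\bfa \in \FF_q^{m+1}$ for which every specialization $F_i(\bfa,t)$ is irreducible.

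Next, I would phrase irreducibility via Frobenius. Let $\bfK$ be a splitting field of $\prod_i F_i(\bfA,t)$ over $\FF_q(\bfA)$ and let $G := \Gal(\bfK/\FF_q(\bfA))$, viewed via its action on the roots of the individual $F_i$'s as a subgroup of $S_{d_1}\times\cdots\times S_{d_n}$. The central ingredient to establish is the Galois maximality
\[
G \;=\; S_{d_1} \times \cdots \times S_{d_n}.
\]
Granting this, the explicit function-field Chebotarev/Lang-Weil estimate used in \cite{PrimePoly_Liors2014, HL_Lior2014}, applied over the $(m+1)$-dimensional parameter space, shows that the density of $\bfa$ whose Frobenius class in $G$ is an $n$-tuple of $d_i$-cycles equals $\prod_{i=1}^n d_i^{-1} + O_B(q^{-1/2})$. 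Multiplying by $q^{m+1} = \#I(f_0,\epsilon)$ delivers the asserted main term, since $d_i = \deg L_i(f_0)$.

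The heart of the proof, and the main obstacle, is the Galois maximality. I would address it in two stages. First, for each fixed $i$, I would show that the individual Galois group $G_i = \Gal(F_i/\FF_q(\bfA))$ equals $S_{d_i}$ by adapting the ramification-based argument of \cite{PrimePoly_Liors2014}: one exhibits specializations $\bfa$ producing a polynomial with exactly one double root (giving a transposition in inertia), together with a reduction modulo a suitable prime of $\FF_q[\bfA]$ giving Frobenius acting as a $d_i$-cycle; these together generate $S_{d_i}$. The primitivity $\gcd(f_i,g_i)=1$ is crucial here to ensure that the branch divisor of the family $F_i$ on $\AA^{m+1}$ is nonempty and sufficiently rich. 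Second, to upgrade to the full product, I would invoke Goursat's lemma: any proper subgroup $G \subsetneq \prod_i S_{d_i}$ surjecting onto every factor yields, for some pair $i\neq j$, a common nontrivial quotient of $S_{d_i}$ and $S_{d_j}$. For $d_i, d_j \geq 5$ this can only be the sign character, and a common sign identification is equivalent to $\disc(F_i) \cdot \disc(F_j) \in \FF_q(\bfA)^{\times 2}$. This is ruled out by comparing the branch divisors of the two families on $\AA^{m+1}_{\FF_q}$: distinctness of $L_i, L_j$ together with the primitivity of each forces the irreducible components of those divisors to be distinct, so no pairing with square multiplicities is possible. The sporadic low-degree cases ($d \leq 4$, including the quotient $S_4/V_4 \cong S_3$) are handled by a similar but more delicate case-by-case geometric inspection, completing the proof.
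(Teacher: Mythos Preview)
Your overall plan matches the paper's: parametrize $I(f_0,\epsilon)$ by $\bfA\in\FF_q^{m+1}$, establish that $\Gal\big(\prod_i L_i(f),\FF_q(\bfA)\big)=\prod_i S_{d_i}$, and finish with an explicit function-field Chebotarev estimate. The reduction and the endgame are essentially identical to the paper's (it actually proves a more general factorization-type statement, Theorem~\ref{thm:general}, of which Theorem~\ref{thm:main} is the special case $\lambda_i=(d_i)$).

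The substantive gap is in how you obtain the product structure of the Galois group. You correctly reduce, via Goursat and the sign character, to showing that $\disc_t(F_i)\cdot\disc_t(F_j)$ is not a square in $\FF_q(\bfA)$, and then assert that ``distinctness of $L_i,L_j$ together with the primitivity of each forces the irreducible components of those divisors to be distinct.'' That assertion is precisely the content of the paper's Proposition~\ref{p_linearDis1}, and it is \emph{not} a soft consequence of distinctness and primitivity: it is the main technical work of the paper. The proof (Lemmas~\ref{l_discPsi}--\ref{l_linearComputation}) converts a hypothetical common root of $\disc_t(F_i)$ and $\disc_t(F_j)$, viewed as polynomials in $A_0$ over $\FF(A_1,\ldots,A_m)$, into a system relating the critical points and critical values of $\Psi_i(t)=\tfrac{f_i+g_if_0}{g_i}+\sum_{j\geq 1}A_jt^j$, and then shows by an explicit calculation---including, in the borderline case $m=2$, solving a differential equation in formal Laurent series---that this system has no solution with $\rho_1\neq\rho_2$. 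Without this computation (or an equivalent one) your sketch has a real hole at the key step.

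A smaller point: your Goursat route forces a separate treatment of small $d_i$ (the $S_4/V_4\cong S_3$ obstruction, etc.). The paper avoids any case analysis by proving the stronger statement that the $\disc_t(F_i)$ are pairwise coprime in $\FF(A_1,\ldots,A_m)[A_0]$, hence square-independent. Since $A_d$ is the \emph{unique} index-$2$ subgroup of $S_d$ for every $d\geq 2$, any nontrivial common quotient of two factors would already force the two quadratic subextensions $K(\sqrt{\disc F_i})$ and $K(\sqrt{\disc F_j})$ to coincide, contradicting square-independence. This handles all $d_i\geq 2$ uniformly.
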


Since $n$ is fixed and the $L_i$'s are all linear, $\frak{S}(L_1, \ldots, L_n) =1+O(q^{-1})$ (see \cite[1.3]{PrimeSpecialization_Pollack2008}). Hence, Theorem~\ref{thm:main} is indeed the analogue of \eqref{eq:conj} over $\FF_q[t]$ in the limit $q\to \infty$.
If $n=1$ and $b=1$, Theorem~\ref{thm:main} reduces to \cite[Corollary~2.4]{PrimePoly_Liors2014}; and if $n=1$ and $\epsilon >1-\frac{1}{B}$, it reduces to \cite[Corollary~2.6]{PrimePoly_Liors2014}. 

We not only count primes but also deal with other factorization types, see Theorem~\ref{thm:general}. The latter may be used to get independence of other arithmetic functions, e.g.\ the $k$-th divisor function $d_k(f) = \#\{(f_1,\ldots, f_k) : f=f_1\cdots f_k\}$, in parallel to \cite{ShiftedConvolution_LiorRudnick2014}.

The main innovation of the paper is the calculation of the Galois groups of certain polynomials, see \S\ref{s_discriminant}. The derivation of the main result from the Galois group calculation is then done in \S\ref{indthm}.

\section{A Galois group calculation}\label{s_discriminant}
The goal of this section is to calculate the Galois group of the product of $n$ linear functions evaluated at a generic polynomial.  \emph{For the rest of the section we fix an algebraically closed field $\FF$ of characteristic  not equal $2$}.

Recall that the discriminant of a monic separable polynomial $\mathcal{F}(t)$ is defined by the resultant of $\mathcal{F}$ and $\mathcal{F}'$:
\begin{equation}
\disc(\mathcal{F})=\pm  {\rm Res}(\mathcal{F},\mathcal{F}')=\pm\prod_{j=1}^\nu\mathcal{F}(\tau_j),
\end{equation}
where $\mathcal{F}' = c\prod_{j=1}^\nu(t-\tau_j)$. 

\begin{prop}\label{p_linearDis1}
Let $L_i(X)=f_i(t)+g_i(t)\cdot X$, where $i=1,2$ be distinct primitive linear functions over $\FF[t]$. Let $h=\sum_{j=0}^{m}A_jt^j$, where $A_j$ are variables. Assume that $\deg(f_i)>\deg(g_i)+m$, $i=1,2$ and that $m\geq 2$. Denote by $d_i=\disc_t(L_i(h))$ the discriminant of $L_i(h(t))$ regarded as a polynomial in $t$. Then $d_1$, $d_2$ are non-squares and are relatively prime in the ring $\FF(A_1, \ldots, A_m)[A_0]$. 
\end{prop}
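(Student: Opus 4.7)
The plan is to recast both conclusions as properties of an auxiliary rational function, then to exploit the $m\geq 2$ free parameters in $\tilde h(t):=\sum_{k=1}^{m}A_kt^k$.

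Set $K:=\FF(A_1,\ldots,A_m)$. Writing $L_i(h)(t)=\tilde P_i(t)+A_0\,g_i(t)$ with $\tilde P_i:=f_i+g_i\tilde h$, the degree hypothesis $\deg f_i>\deg g_i+m$ gives $\deg_tL_i(h)=\deg f_i=:N_i$, and primitivity gives $\gcd(\tilde P_i,g_i)=\gcd(f_i,g_i)=1$. Introduce
\[
\pi_i(t):=-\tilde P_i(t)/g_i(t)=-f_i(t)/g_i(t)-\tilde h(t)\in K(t),
\]
a rational map $\mathbb{P}^1\to\mathbb{P}^1$ of degree $N_i$ over $K$. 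A value $a\in\overline K$ is a root of $d_i\in K[A_0]$ precisely when $L_i(h)|_{A_0=a}=\tilde P_i+a\,g_i$ has a multiple root in $t$, equivalently when $a$ is a critical value of $\pi_i$. Direct computation yields
\[
\pi_i'(t)=-\Phi_i(t)/g_i(t)^{2},\qquad \Phi_i:=W_i+g_i^{2}\tilde h',\qquad W_i:=f_i'g_i-g_i'f_i,
\]
so the finite critical points of $\pi_i$ (those avoiding the poles $g_i=0$) are precisely the roots of $\Phi_i$, with critical values $\pi_i(\tau)$; the multiplicity of $a$ as a root of $d_i$ is $\sum_{\tau\in\pi_i^{-1}(a)}(e_\tau-1)$.

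Hence $d_i$ is squarefree in $K[A_0]$ iff $\pi_i$ is \emph{simply branched} (each critical value has one preimage with $e_\tau=2$, the others $e_\tau=1$), and $\gcd(d_1,d_2)=1$ iff the critical-value sets of $\pi_1,\pi_2$ are disjoint. Both together imply that $d_1,d_2$ are non-squares (note $\deg_{A_0}d_i>0$ by Riemann--Hurwitz) and coprime. The two structural facts to exploit are: (a) each $\pi_i=-f_i/g_i-\tilde h$ is a perturbation of a fixed rational function by the generic degree-$m$ polynomial $-\tilde h$; (b) $\pi_1-\pi_2=(f_2g_1-f_1g_2)/(g_1g_2)$ is a fixed nonzero rational function independent of $\tilde h$, since $L_1,L_2$ are distinct primitive linears. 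The proposition thereby reduces to three Zariski-open conditions on $(A_1,\ldots,A_m)\in\AA^m$: (i) $\Phi_i$ is squarefree in $t$; (ii) distinct roots of $\Phi_i$ give distinct $\pi_i(\tau)$; (iii) no critical value of $\pi_1$ equals one of $\pi_2$. For (i), I would specialize $A_1=\cdots=A_{m-1}=0$ with $A_m$ variable, reducing $\Phi_i$ to the pencil $W_i+mA_mg_i^{2}t^{m-1}$; its discriminant in $t$ is a nonzero polynomial in $A_m$ provided $W_i$ is not proportional to $g_i^{2}t^{m-1}$, a degeneracy ruled out by primitivity together with $\deg f_i-\deg g_i>m$.

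The main obstacle is (ii) and (iii): the critical points $\tau$ depend algebraically on the $A_k$'s, so equalities like $\pi_i(\tau)=\pi_i(\tau')$ are nontrivial algebraic identities to rule out. For (iii), the key leverage is that $\pi_1-\pi_2$ does not involve $\tilde h$: a coincidence $\pi_1(\tau_1)=\pi_2(\tau_2)$ together with $\Phi_1(\tau_1)=\Phi_2(\tau_2)=0$ gives three equations in the two unknowns $\tau_1,\tau_2$, so projects to a proper subvariety of $\AA^m$ and hence fails generically. For (ii) the argument is analogous, using that $\tilde h(\tau)-\tilde h(\tau')=\sum_{k=1}^{m}A_k(\tau^k-(\tau')^k)$ depends nontrivially on $A_1$ whenever $\tau\neq\tau'$; the hypothesis $m\geq 2$ supplies the second parameter needed to close the dimension count and secure non-vacuity of all three open conditions simultaneously.
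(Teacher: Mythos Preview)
Your reformulation via the rational maps $\pi_i$ and their critical values is exactly the paper's setup in disguise: your $\pi_i$ is $-\Psi_i$, and the reduction of coprimality of $d_1,d_2$ in $K[A_0]$ to disjointness of critical-value sets is precisely what the paper does via Lemmas~2.2--2.3 and the proof of Proposition~2.1. For the non-square part you diverge slightly: the paper simply cites the external result that $\Gal(L_i(h),\FF(\mathbf{A}))=S_{k_i}$ (so $d_i$ cannot be a square), whereas you aim for the stronger conclusion that $d_i$ is squarefree via simple branching (your conditions (i) and (ii)).

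The genuine gap is in the dimension counts for (ii) and (iii). The slogan ``three equations in the two unknowns $\tau_1,\tau_2$, so projects to a proper subvariety of $\AA^m$'' is a heuristic, not a proof: three equations in $\AA^2\times\AA^m$ need not cut the dimension down to $m-1$ unless one verifies they are independent. This verification is exactly the content of the paper's Lemma~2.4, which is the technical heart of the whole argument. There the system is rewritten as $M(\rho)\binom{A_2}{A_1}=B(\rho)$ with $M$ a $3\times 2$ matrix depending only on $\rho$, and one must show $d(\rho):=\det(M\mid B)$ is a nonzero rational function. For $m\geq 3$ this follows by inspecting the coefficient of $A_3$; for $m=2$ it requires solving a functional/differential equation in $\psi_i=f_i/g_i$ and invoking $\deg f_i>\deg g_i+2$ to reach a contradiction. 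Your sketch does not address the $m=2$ case at all, and the ``key leverage'' you invoke --- that $\pi_1-\pi_2$ is independent of $\tilde h$ --- is only directly useful when $\tau_1=\tau_2$ (where it forces $\tau_1$ to be algebraic over $\FF$, contradicting $\pi_1(\tau_1)=-A_0$, just as in the paper). When $\tau_1\neq\tau_2$ the equation $\pi_1(\tau_1)=\pi_2(\tau_2)$ \emph{does} involve $\tilde h(\tau_1)-\tilde h(\tau_2)$, so the independence of the three conditions is not automatic and must be checked by the explicit computation above. The same issue afflicts your outline for (ii); and your argument for (i) via the specialization $A_1=\cdots=A_{m-1}=0$ breaks down when $p\mid m$, since then $mA_mt^{m-1}=0$ and no dependence on $A_m$ remains.
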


Before proving the proposition we will prove three auxiliary results. 

\begin{lemma}\label{l_discPsi}
Let \lll be a primitive linear function and let $h(\mathbf{A},t)=\sum_{j=0}^{m}A_jt^j$ be a polynomial with variable coefficients.
Denote $\Psi(t)=\frac{f(t)}{g(t)}+\sum_{j=1}^{m}A_jt^j$. Assume that $(L(h))(\alpha)=0$ for some $\alpha$ in an algebraic closure $\Omega$ of $\FF(\mathbf{A})$. Then 
$$(L(h))'(\alpha)=0\iff \Psi'(\alpha)=0$$
\end{lemma}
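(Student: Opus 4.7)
The plan is to observe that $\Psi$ is essentially $L(h)/g$ up to the constant $A_0$, and then carry out a short calculation using the product rule together with the primitivity of $L$.

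First, I would rewrite $L(h)$ in terms of $\Psi$. By definition,
\[
L(h)(t)=f(t)+g(t)\cdot h(\mathbf{A},t)=g(t)\Bigl(\tfrac{f(t)}{g(t)}+h(\mathbf{A},t)\Bigr)=g(t)\bigl(\Psi(t)+A_{0}\bigr),
\]
since $\Psi+A_{0}=\tfrac{f}{g}+\sum_{j=0}^{m}A_{j}t^{j}$. Applying the Leibniz rule gives
\[
(L(h))'(t)=g'(t)\bigl(\Psi(t)+A_{0}\bigr)+g(t)\Psi'(t),
\]
because $A_{0}$ is a constant in $t$.

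The next step is to observe that $g(\alpha)\neq 0$. Indeed, if $g(\alpha)=0$, then the assumption $L(h)(\alpha)=0$ forces $f(\alpha)=0$, contradicting the fact that $L$ is primitive (so $f$ and $g$ share no common root in the algebraic closure $\Omega$). Hence from $L(h)(\alpha)=g(\alpha)(\Psi(\alpha)+A_{0})=0$ we must have $\Psi(\alpha)+A_{0}=0$. Substituting into the formula for $(L(h))'(\alpha)$ kills the first summand and yields
\[
(L(h))'(\alpha)=g(\alpha)\,\Psi'(\alpha).
\]
Since $g(\alpha)\neq 0$, the claimed equivalence $(L(h))'(\alpha)=0\iff \Psi'(\alpha)=0$ follows immediately.

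There is no real obstacle in this lemma: the only subtlety is remembering to invoke primitivity of $L$ to rule out $g(\alpha)=0$, since without this the factorization $L(h)=g\cdot(\Psi+A_{0})$ alone would not let us pass between vanishing of the derivatives.
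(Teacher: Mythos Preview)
Your proof is correct and essentially identical to the paper's own argument: the same factorization $L(h)=g\cdot(\Psi+A_0)$, the same use of primitivity to get $g(\alpha)\neq 0$, and the same derivative computation leading to $(L(h))'(\alpha)=g(\alpha)\Psi'(\alpha)$.
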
 

\begin{proof}
Write
$$(L(h))(t)=f(t)+g(t)\cdot \sum_{j=0}^m A_jt^j=(A_0+\Psi(t))\cdot g(t)$$
Since $f$ and $g$ are relatively prime, they do not have a common zero. Therefore, if $(L(h))(\alpha)=0$ then $g(\alpha)\neq 0$, which means that $A_0+\Psi(\alpha)=0$. Now,
$$(L(h))'(\alpha)=g'(\alpha)(A_0+\Psi(\alpha))+g(\alpha)\Psi'(\alpha)=g(\alpha)\Psi'(\alpha).$$
Hence, $(L(h))'(\alpha)=0\iff \Psi'(\alpha)=0$.
\end{proof}
\begin{lemma}\label{l_change sys of equasions}
Let $L_i(X)=f_i(t)+g_i(t)\cdot X$, $i=1,2$ be distinct primitive linear functions. Let $h(\mathbf{A},t)=\sum_{j=0}^{m}A_jt^j$, with $A_0, \ldots, A_m$ variables and let $\Psi_i(t)=\frac{f_i(t)}{g_i(t)}+\sum_{j=1}^{m}A_jt^j$. 
If $\rho_1,\rho_2$ in an algebraic closure of $\FF(\mathbf{A})$ solve the linear system 
\begin{equation}\label{eq:Lexcellentmorse}
\left\{
\begin{split}
&(L_1(h))'(\rho_1)=0\\
&(L_2(h))'(\rho_2)=0\\
&\Psi_1(\rho_1)=\Psi_2(\rho_2)=-A_0
\end{split}\right.
\end{equation}
then they solve the linear system
\begin{equation}\label{eq:excellentmorse}
\left\{
\begin{split}
\Psi_1'(\rho_1)&=0\\
\Psi_2'(\rho_2)&=0\\
\Psi_1(\rho_1)&=\Psi_2(\rho_2).
\end{split}\right.
\end{equation}
\end{lemma}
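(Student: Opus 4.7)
The proof reduces quickly to a direct application of Lemma~\ref{l_discPsi}. The plan is as follows.

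First, I note that the third equation of \eqref{eq:excellentmorse}, namely $\Psi_1(\rho_1) = \Psi_2(\rho_2)$, is immediate from the third equation of \eqref{eq:Lexcellentmorse}, since both $\Psi_i(\rho_i)$ are equal to the same quantity $-A_0$.

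Next, I would handle each of the first two equations $\Psi_i'(\rho_i) = 0$ separately (and symmetrically) for $i=1,2$. The key observation is the algebraic identity
\[
(L_i(h))(t) = f_i(t) + g_i(t)\sum_{j=0}^{m} A_j t^j = g_i(t)\bigl(A_0 + \Psi_i(t)\bigr),
\]
which is exactly the decomposition already used in the proof of Lemma~\ref{l_discPsi}. The hypothesis $\Psi_i(\rho_i) = -A_0$ thus gives $(L_i(h))(\rho_i) = 0$, so $\rho_i$ is a zero of $L_i(h)$ in an algebraic closure of $\FF(\mathbf{A})$.

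With this, the hypotheses of Lemma~\ref{l_discPsi} are satisfied at $\alpha = \rho_i$ (applied to the primitive linear function $L_i$), so the lemma yields the equivalence
\[
(L_i(h))'(\rho_i) = 0 \iff \Psi_i'(\rho_i) = 0.
\]
Since \eqref{eq:Lexcellentmorse} provides $(L_i(h))'(\rho_i) = 0$, we conclude $\Psi_i'(\rho_i) = 0$ for $i=1,2$, which completes the system \eqref{eq:excellentmorse}.

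There is no real obstacle here; the content of the lemma is just that the system \eqref{eq:Lexcellentmorse} is \emph{literally} the system \eqref{eq:excellentmorse} after passing through the factorization $L_i(h) = g_i\cdot(A_0 + \Psi_i)$ and invoking Lemma~\ref{l_discPsi}. The only care needed is to verify that one may legitimately apply Lemma~\ref{l_discPsi} --- i.e., that $L_i$ is a primitive linear function (given) and that $\rho_i$ lies in an algebraic closure of $\FF(\mathbf{A})$ with $(L_i(h))(\rho_i)=0$ (deduced above from $\Psi_i(\rho_i) = -A_0$).
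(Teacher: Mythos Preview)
Your proof is correct and follows essentially the same approach as the paper: deduce $(L_i(h))(\rho_i)=0$ from $\Psi_i(\rho_i)=-A_0$ via the factorization $L_i(h)=g_i\cdot(A_0+\Psi_i)$, then invoke Lemma~\ref{l_discPsi} to convert $(L_i(h))'(\rho_i)=0$ into $\Psi_i'(\rho_i)=0$. The paper's proof is simply a terser version of what you wrote.
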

\begin{proof}
Note that since $A_0+\Psi_i(\rho_i)=0$, it follows that $(L_i(h))(\rho_i)=0$. Using Lemma~\ref{l_discPsi}, $\Psi_i'(\rho_i)=0$ for $i=1,2$.
\end{proof}

\begin{lemma}\label{l_linearComputation}
Let 
$2\leq m$ be an integer and let $L_1=f_1+g_1X,L_2=f_2+g_2X$ be distinct primitive linear functions over $\FF[t]$. Assume that $\deg(f_i)>\deg(g_i)+m,$ and let $h(\mathbf{A},t)=\sum_{j=0}^{m}A_jt^j$ be a polynomial with variable coefficients and $\Psi_i(t)=\frac{f_i(t)}{g_i(t)}+\sum_{j=1}^{m}A_jt^j$.
Then the linear system \eqref{eq:excellentmorse} has no solution for $\rho_1\neq\rho_2$ in an algebraic closure $\Omega$ of $\FF(\mathbf{A})$.
\end{lemma}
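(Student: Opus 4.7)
The plan is a dimension count. The system~\eqref{eq:excellentmorse} together with $\rho_1 \neq \rho_2$ defines an affine variety
\[
V = \{(A_1,\dots,A_m,\rho_1,\rho_2) \in \AA^{m+2} : \text{\eqref{eq:excellentmorse} holds and } \rho_1 \neq \rho_2\}
\]
over $\FF$; a pair $\rho_1 \neq \rho_2$ in $\Omega$ satisfying \eqref{eq:excellentmorse} is the same data as a point of $V$ above the generic point of the affine $m$-space with coordinates $A_1,\dots,A_m$. It therefore suffices to bound $\dim V \leq m - 1$, and I would do this by projecting $V$ instead onto the $(\rho_1,\rho_2)$-plane. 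The system is linear in $A_1,\dots,A_m$ with $3 \times m$ coefficient matrix
\[
M(\rho_1,\rho_2) = \begin{pmatrix} 1 & 2\rho_1 & \cdots & m\rho_1^{m-1} \\ 1 & 2\rho_2 & \cdots & m\rho_2^{m-1} \\ \rho_1 - \rho_2 & \rho_1^2 - \rho_2^2 & \cdots & \rho_1^m - \rho_2^m \end{pmatrix}
\]
and right-hand side depending only on $\phi_i := f_i/g_i$ and $\phi_i'$ evaluated at $\rho_i$.

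For $m \geq 3$, a short row reduction yields
\[
\det\begin{pmatrix} 1 & 2\rho_1 & 3\rho_1^2 \\ 1 & 2\rho_2 & 3\rho_2^2 \\ \rho_1-\rho_2 & \rho_1^2-\rho_2^2 & \rho_1^3 - \rho_2^3 \end{pmatrix} = (\rho_1 - \rho_2)^4,
\]
so $M$ has rank $3$ on the open locus $\rho_1 \neq \rho_2$. The fiber of $V$ over each such point is then an affine space of dimension $m - 3$, giving $\dim V = 2 + (m - 3) = m - 1$.

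The case $m = 2$ is more delicate. Here the third row of $M$ equals $\tfrac{\rho_1 - \rho_2}{2}$ times the sum of the first two rows, so $M$ has rank $2$ and the system is consistent only on the locus
\[
\phi_1(\rho_1) - \phi_2(\rho_2) = \tfrac{\rho_1 - \rho_2}{2}\bigl(\phi_1'(\rho_1) + \phi_2'(\rho_2)\bigr).
\]
To verify that this locus is a proper subvariety of $\AA^2$, I would specialize $\rho_2 = 0$ and examine the top-degree behavior in $\rho_1$: setting $N := \deg f_1 - \deg g_1$, the hypothesis forces $N \geq m + 1 = 3$, and the coefficient of $\rho_1^N$ in $\phi_1(\rho_1) - \tfrac{\rho_1}{2}\phi_1'(\rho_1)$ equals $(1 - N/2)$ times the nonzero leading coefficient of $\phi_1$, hence is itself nonzero. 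Thus the consistency locus is one-dimensional; its fiber in $V$ over any point is a single $(A_1,A_2)$, and so $\dim V \leq 1 = m - 1$ in this case too.

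In both cases $\dim V \leq m - 1 < m$, so the projection to the $A$-space is not dominant and the lemma follows. The main obstacle is the $m = 2$ case, where one must use the degree hypothesis on $f_i$ and $g_i$ to rule out the accidental possibility that the overdetermined rank-$2$ linear system is consistent on all of $\AA^2$.
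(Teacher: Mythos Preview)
Your overall strategy—projecting the incidence variety $V$ onto the $(\rho_1,\rho_2)$-plane and bounding fiber dimensions—is exactly the paper's approach. For $m \geq 3$ your argument is correct and in fact slightly cleaner than the paper's: you exhibit a full-rank $3\times 3$ minor of the coefficient matrix directly (the identity $\det = (\rho_1-\rho_2)^4$ holds over $\ZZ$, hence in every characteristic), whereas the paper isolates only the $A_1,A_2$ columns, absorbs $A_3,\ldots,A_m$ into the right-hand side, and then checks that the resulting augmented $3\times 3$ determinant has a nonzero $A_3$-coefficient. Both routes land on the bound $\dim V \le m-1$.

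The gap is in your $m=2$ argument. You assert that the coefficient of $\rho_1^N$ in $\phi_1(\rho_1) - \tfrac{\rho_1}{2}\phi_1'(\rho_1)$ equals $(1-N/2)$ times the leading coefficient of $\phi_1$, hence is nonzero because $N \geq 3$. But $\FF$ may have positive characteristic $p\neq 2$, and $1 - N/2 = (2-N)/2$ vanishes in $\FF$ whenever $p \mid N-2$. For instance, in characteristic $3$ with $f_1 = t^5$, $g_1 = 1$ (so $N=5$), one has $\phi_1(\rho_1) - \tfrac{\rho_1}{2}\phi_1'(\rho_1) \equiv 0$, and your specialization then collapses to the low-degree expression $-\phi_2(0) - \tfrac{\rho_1}{2}\phi_2'(0)$, which can itself vanish identically for suitable $\phi_2$. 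So the top-degree check alone does not close the case. (A minor side issue: $\rho_2 = 0$ could be a pole of $\phi_2$; this is easily repaired by choosing another base point.)

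The paper handles $m=2$ by a different and more robust route: it assumes the consistency relation
\[
2\bigl(\phi_1(\rho_1)-\phi_2(\rho_2)\bigr) = (\rho_1-\rho_2)\bigl(\phi_1'(\rho_1)+\phi_2'(\rho_2)\bigr)
\]
holds identically, treats it as a pair of ordinary differential equations (by specializing each variable in turn to a zero of the other $\phi$), solves for $\phi_1$ and $\phi_2$ in the field of formal Laurent series, and deduces that both must be polynomials of degree at most $2$—contradicting $\deg f_i > \deg g_i + 2$. This Laurent-series analysis is precisely what absorbs the characteristic-$p$ cancellations that defeat your leading-term computation.
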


The proof is in the spirit of the proof of \cite[Lemma~3.5]{PrimePoly_Liors2014} and uses the tools of Carmon-Rudnick in \cite{Mobius_CarmonRudnick2014}. 
\begin{proof}
For short we write $\rho=(\rho_1,\rho_2)$.
Let
\[
-\varphi_i(t)=\bigg(\psi_i(t)+\sum_{j=3}^m A_jt^j\bigg)' =\psi_i(t)'+\sum_{j=3}^m jA_jt^{j-1},
\] 
where $i=1,2$ and $\psi_i=\frac{f_i}{g_i}$.
Let
\[
c(\rho) 
= \psi_1(\rho_1)-\psi_2(\rho_2) + \sum_{j=3}^m(\rho_1^{j}-\rho_2^{j})A_j.
\]
Then if $m\geq 3$,
\begin{align}
\Psi_i'(t)&=2A_2t+A_1-\varphi_i(t) \nonumber \\ 
c(\rho)&= \Psi_1(\rho_1)-\Psi_2(\rho_2) - ((\rho_1^2-\rho_2^2)A_2+(\rho_1-\rho_2)A_1).\label{eqmgeq3}
\end{align}

The system of equations \eqref{eq:excellentmorse} defines an algebraic set $T\subseteq \mathbb{A}^2\times \mathbb{A}^{m}$ in the variables $\rho_1, \rho_2, A_1, \ldots, A_m$.
It takes the matrix form 
\begin{equation}\label{eq:excellent}
M(\rho) \cdot \big(\begin{smallmatrix} A_2\\A_1\end{smallmatrix}\big) = B(\rho)=\Big(\begin{smallmatrix} \varphi_1(\rho_1)\\ \varphi_2(\rho_2)\\c(\rho)\end{smallmatrix}\Big), 
\end{equation}
where $M(\rho) = \left(\begin{smallmatrix}
2 \rho_1&1\\
2\rho_2&1\\
\rho_2^2-\rho_1^2&\rho_2-\rho_1
\end{smallmatrix}\right)$.

Let $\alpha\colon T\to \mathbb{A}^2$ and $\beta\colon T\to \mathbb{A}^m$ be the projection maps.
We note that since $\Psi_1\neq \Psi_2$, there are only finitely many solution of  \eqref{eq:excellentmorse} with $\rho_1= \rho_2$.

For every $\rho\in U=\{\rho\mid \rho_1\neq \rho_2,\ \varphi_i(\rho_i)\neq \infty, i=1,2\}$, the rank of $M(\rho)$ is $2$. Thus, the dimension of the fiber $\alpha^{-1}(\rho)$ is at most $m-2$ for any $\rho\in U$.
Moreover, for a given $\rho\in U$, \eqref{eq:excellent} is solvable if and only if $\mathop{\rm rank}(M|B)=2$ if and only if $d(\rho)=\det(M|B)=0$. This means that the solution space (restricting to $\rho\in U$) lies in $d(\rho)=0$. 

It suffices to prove that $d(\rho)$ is a nonzero rational function in the variables $\rho=(\rho_1,\rho_2)$. Indeed, this implies that  $\dim (\alpha(T))\leq \dim \{d(\rho)=0\} = 1$, so $\dim T \leq 1+m-2<m$. Thus, $\beta(T)$ does not contain the generic point of $\mathbb{A}^{m}$, which is $\mathbf{A}=(A_1,\ldots, A_m)$, and hence \eqref{eq:excellentmorse} has no solution with $\rho\in \Omega^2$. 

A straightforward calculation gives 
\[
d(\rho) = (\rho_1-\rho_2) (2c(\rho) +(\rho_1-\rho_2)(\varphi_1(\rho_1)+\varphi_2(\rho_2))).
\]
By \eqref{eqmgeq3}, if $m\geq 3$, then the coefficient of $A_3$ in $2c(\rho) +(\rho_1-\rho_2)(\varphi_1(\rho_1)+\varphi_2(\rho_2))$ is
\[
2(\rho_1^3-\rho_2^3)-3(\rho_1^2+\rho_2^2)(\rho_2-\rho_1),
\]
which is nonzero in any characteristic and we are done. 

Assume $m=2$. 
Then $c(\rho) = \psi_1(\rho_1)-\psi_2(\rho_2)$ and $\varphi_i(\rho_i)=-\psi_i'(\rho_i)$. So,
\[
d(\rho)=(\rho_1-\rho_2)(2(\psi_1(\rho_1)-\psi_2(\rho_2))-(\rho_1-\rho_2)(\psi_1'(\rho_1)+\psi_2'(\rho_2))).
\]

Assume that $d=0$ as a polynomial in $\rho$. Then also
\begin{equation}\label{eq:differential}
0=2(\psi_1(\rho_1)-\psi_2(\rho_2))-(\rho_1-\rho_2)(\psi_1'(\rho_1)+\psi_2'(\rho_2)).
\end{equation}
Let us solve \eqref{eq:differential} with $\psi_i$ rational function in $\rho_i$, $i=1,2$. 
Choose $\alpha$ such that $\psi_2(\alpha)=0$. By replacing $\rho_2$ by $\rho_2+\alpha$, we may assume that $\alpha=0$. Substituting $0$ for $\rho_2$ gives rise to the differential equation 
\begin{equation}\label{eq:psi1}
0=2\psi_1(\rho_1)-\rho_1\psi_1'(\rho_1)-\rho_1\psi_2'(0)
\end{equation}
As an element of the field of formal Laurent series, $\psi_1$ solving \eqref{eq:psi1} must have the form:
\begin{equation}\label{eq:psi1Sol}
\psi_1(\rho_1)=\rho_1\psi_2'(0)+\sum_{i=N}^{\infty}a_{ip+2}\rho_1^{ip+2}, \qquad N\in \mathbb{Z}.
\end{equation}
Plug \eqref{eq:psi1} and \eqref{eq:psi1Sol} in \eqref{eq:differential} to get 
\begin{align*}
0 &= 
\rho_1\psi_2'(0)  -2\psi_2(\rho_2) -\rho_1\psi_2'(\rho_2) +  \rho_2\left(\psi_2'(0)+\sum_{i=N}^{\infty}2a_{ip+2}\rho_1^{ip+1}\right)+\rho_2\psi_2'(\rho_2).
\end{align*}
Substituting $0$ for $\rho_1$ we get 
\begin{equation}\label{eq:psi2}
0=-2\psi_2(\rho_2)+\rho_2\psi_2'(0)+\rho_2\psi_2'(\rho_2),
\end{equation}
which is almost identical to  \eqref{eq:psi1Sol}; therefore, 
\begin{equation}\label{eq:psi2Sol}
\psi_2(\rho_2)=\rho_2\psi_2'(0)+\sum_{i=N}^{\infty}c_{ip+2}\rho_2^{ip+2}.
\end{equation}
Here, without loss of generality, we assume the series for $\psi_1$ and $\psi_2$ start at the same $N$, as we allow the coefficients to be zero.
Plug \eqref{eq:psi1Sol} and \eqref{eq:psi2Sol} in the original equation \eqref{eq:differential} to get,
\begin{equation}
0=2\left(\sum_{i=N}^{\infty}a_{ip+2}\rho_2\rho_1^{ip+1}-\sum_{i=N}^{\infty}c_{ip+2}\rho_1\rho_2^{ip+1}\right)
\end{equation}
By comparing the coefficients of $\rho_1\rho_2^{ip+1}$ and $\rho_2\rho_1^{ip+1}$, one gets that $a_{ip+2}=c_{ip+2}=0$  for all $i\neq 0$ and $a_2=c_2$. This means that 
\begin{align*}
\psi_1(\rho_1) &=\rho_1\psi_2'(0)+a_2\rho_1^2\\ 
\psi_2(\rho_2)&=\rho_2\psi_2'(0)+a_2\rho_2^2 
\end{align*}
in contradiction to the assumption that $\psi_i=\frac{f_i}{g_i}$ where $\deg (f_i) >2$ and $f_i,g_i$ are relatively prime. Therefore, $d(\rho)$ is not the  zero polynomial, as needed to conclude the proof.
\end{proof}

\begin{proof}[Proof of Proposition~\ref{p_linearDis1}]
By \cite[Proposition~3.6]{PrimePoly_Liors2014},  
\(\Gal(L_i(h),\FF(\mathbf{A}))\) is the full symmetric group. Hence, $d_i$ is not a square in $\FF(\mathbf{A})$ for each $i=1,2$ (otherwise, the group would be a subgroup of the alternating group) and in particular in $\FF(A_1,\ldots, A_m)[A_0]$. 
If $d_1$, $d_2$ are not relatively prime in $\FF(A_1,\ldots, A_m)[A_0]$, then they have a common root (as polynomials in $A_0$). 
Now,
\begin{align*}
d_1&= \disc_{t} L_1(h(t))=\pm \prod_{j=1}^{\nu}(L_1(h))(\tau_j)\\
&=\pm \prod_{j=1}^{\nu}g_1(\tau_j)(A_0+\Psi_1(\tau_j))
\end{align*}
where $(L_1(h))'(t) = c\cdot\prod_{j=1}^{\nu}(t-\tau_j)$. 
A root $\rho_1$ of $d_1$ must therefore satisfy:
\begin{equation}
\left\{
\begin{split}
&(L_1(h))'(\rho_1)=0\\
&\Psi_1(\rho_1)=-A_0
\end{split}\right.
\end{equation}
(note that if $g_1(\tau_j)=0$ then $(L_1(h))(\tau_j)\neq 0$). 
A root $\rho_2$ of $d_2$ satisfies the analogues equations. Thus, the condition that $d_1$ and $d_2$ have a common root translates into the linear system \eqref{eq:Lexcellentmorse}. By Lemma \ref{l_change sys of equasions},
the solutions for this system is a subset of the solutions of the linear system \eqref{eq:excellentmorse}, which is an empty set by Lemma \ref{l_linearComputation} whenever $\rho_1\neq \rho_2$. If $\rho_1=\rho_2$, then 
\[
\Psi_1(\rho_1)=\Psi_2(\rho_1)=-A_0
\] 
hence
\[
f_1(\rho_1)g_2(\rho_1)-f_2(\rho_1)g_1(\rho_1)=0.
\]
So $\rho_1$ is algebraic over $\FF$ in contradiction to $\Psi_1(\rho_1)=-A_0$.
Therefore, $d_1\cdot d_2$ is indeed not a square in  $\FF(\mathbf{A})$. Thus, $d_1$ and $d_2$ are relatively prime in $\FF(A_1,\ldots, A_m)[A_0]$.
\end{proof}

\begin{prop}\label{p_productGalois}
Let $L_1,\cdots,L_n$ be distinct primitive linear functions and $f_0\in \FF[t]$ a monic polynomial of degree $k$. Let $f=f_0+\sum_{j=0}^m A_jt^j$
where $2\leq m<k$. Then,
\[
\Gal\left(\prod_{i=1}^{n}L_i(f),\FF(\mathbf{A})\right)=\prod_{i=1}^{n}\Gal(L_i(f),\FF(\mathbf{A}))=S_{k_1}\times \cdots \times S_{k_n},
\]
where $k_i = \deg(L_i(f_0))$.
\end{prop}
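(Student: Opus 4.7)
The plan is induction on $n$, with base case $n=1$ given by \cite[Prop.~3.6]{PrimePoly_Liors2014}. Set $F = \FF(\mathbf{A})$, let $E_i$ denote a splitting field of $L_i(f)$ over $F$, and write $E_{[n-1]} = E_1 \cdots E_{n-1}$. Assuming inductively that $\Gal(E_{[n-1]}/F) = \prod_{i<n} S_{k_i}$, the natural embedding $\Gal(E_{[n]}/F) \hookrightarrow \Gal(E_{[n-1]}/F) \times \Gal(E_n/F)$ is automatically surjective onto each factor (by restriction of $F$-automorphisms), so the inductive step reduces to showing that $E_{[n-1]} \cap E_n = F$.

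Suppose $M := E_{[n-1]} \cap E_n$ were strictly larger than $F$; necessarily $k_n \geq 2$. Then $\Gal(M/F)$ is a nontrivial quotient of $S_{k_n}$, and every such quotient admits $\ZZ/2\ZZ$ as a further quotient via the sign map, as one checks from the list of normal subgroups of $S_{k_n}$ (including the exceptional case $k_n=4$, where the quotient $S_3 = S_4/V_4$ still has $\ZZ/2\ZZ$ on top). Consequently the unique quadratic subfield of $E_n$, namely $F(\sqrt{d_n})$, lies in $M$ and hence in $E_{[n-1]}$; being abelian over $F$, it must lie in the maximal abelian subextension $E_{[n-1]}^{\mathrm{ab}} = F(\sqrt{d_1}, \ldots, \sqrt{d_{n-1}})$ by the inductive hypothesis. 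So some nonempty subset product $\prod_{i \in T} d_i$ with $n \in T$ would be a square in $F$.

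To close the argument, I would prove that no nonempty subset product $\prod_{i \in T} d_i$ is a square in $F$. Proposition~\ref{p_linearDis1} applied to each pair $(L_i, L_j)$ shows that $d_i, d_j$ share no root in the algebraic closure of $\FF(A_1, \ldots, A_m)$ and each is a non-square; equivalently, they are pairwise coprime and individually non-squares in the UFD $R := \FF(A_1, \ldots, A_m)[A_0]$. A short additional argument shows each $d_i$ is in fact squarefree in $R$: the factorization $d_i = \pm \prod_j g_i(\tau_j^{(i)})(A_0 + \Psi_i(\tau_j^{(i)}))$ used inside the proof of Proposition~\ref{p_linearDis1} reduces this to distinctness of the critical values $\Psi_i(\tau_j^{(i)})$, which can be established by a suitable specialization of $A_1, \ldots, A_m$ (using $m \geq 2$). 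Combining squarefreeness with pairwise coprimality, $\prod_{i \in T} d_i$ is a squarefree element of positive $A_0$-degree in $R$, hence not a square in $R$, hence not a square in $F$ — the desired contradiction.

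The main obstacle is the squarefreeness step: pairwise coprimality together with individual non-squareness does not suffice in general to rule out larger subset products being squares, so one genuinely needs each $d_i$ to contribute a prime factor of positive $A_0$-degree with odd multiplicity. The rest of the argument is Goursat-theoretic bookkeeping exploiting the fact that $S_{k_n}$ has $\ZZ/2\ZZ$ as its unique nontrivial abelian quotient.
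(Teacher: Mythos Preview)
Your proof is correct and follows the same route as the paper: establish $\Gal(L_i(f),\FF(\mathbf{A}))\cong S_{k_i}$ via \cite[Prop.~3.6]{PrimePoly_Liors2014}, control the discriminants $d_i$ through Proposition~\ref{p_linearDis1}, and finish with a Goursat-type step.  The paper compresses the last step into a citation of the discussion before \cite[Lemma~3.4]{IrrVal_Lior2012}, whereas you unwind it by induction through the quadratic subfields; these are the same argument.

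Your caveat is well taken and is a point the paper passes over: pairwise coprimality of the $d_i$ in $R=\FF(A_1,\ldots,A_m)[A_0]$ together with individual non-squareness does \emph{not} by itself force every subset product $\prod_{i\in T}d_i$ to be a non-square (e.g.\ $d_1=u(A_0-1)^2$, $d_2=u(A_0-2)^2$ with $u\in R^\times$ a non-square).  What is actually needed is that each $d_i$ is not a unit times a square in $R$, i.e.\ carries some irreducible factor of positive $A_0$-degree with odd multiplicity; then pairwise coprimality makes those factors disjoint and the product is automatically a non-square.  Your squarefreeness claim is the clean way to get this, and it is available: the Morse-type analysis underlying \cite[Prop.~3.6]{PrimePoly_Liors2014} shows that over $\FF(A_1,\ldots,A_m)$ the map $\Psi_i$ has simple critical points \emph{and} distinct critical values, so the linear factors $A_0+\Psi_i(\tau_j^{(i)})$ of $d_i$ are pairwise distinct.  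Note that you need both halves of the Morse condition, not only distinctness of the critical values as your sketch suggests, since a repeated critical point would already give a repeated linear factor.
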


\begin{proof}
Let $f=f_0+\sum_{j=0}^m A_jt^j$. Then, 
\[
L_i(f)= \tilde{L}_i(h)
\]
where $\tilde{L}_i=\tilde{f_i}+g_i\cdot X$, $h=\sum_{j=0}^m A_jt^j$, and $\tilde{f}_i = f_i + g_if_0$. 
Since $m<k$, it follows that $\deg (\tilde{f_i})>\deg (g_i) +m$ for each $i$. Since $m\geq 2$, \cite[Proposition~3.6]{PrimePoly_Liors2014} gives
\begin{equation}\label{eq:GG}
\Gal(L_i(f), \FF(\mathbf{A})) = \Gal(\tilde{L}_i(h),\FF(\mathbf{A}))\cong S_{k_i}
\end{equation}
By Proposition~\ref{p_linearDis1}, the discriminants $d_i=\disc_t(\tilde{L}_i(h))=\disc_t(L_i(f))$ are non-squares and pairwise relatively prime in $\FF(A_1, \ldots,A_m)[A_0]$. 
So, $d_1, \ldots, d_n$ are square independent (in the sense that any product is non-square). Together with \eqref{eq:GG}, the discussion before \cite[Lemma~3.4]{IrrVal_Lior2012}  gives that 
\[
\Gal\left(\prod_{i=1}^{n}L_i(f),\FF(\mathbf{A})\right)=\prod_{i=1}^{n}\Gal(L_i(f),\FF(\mathbf{A}))=S_{k_1}\times \cdots \times S_{k_n},
\]
as needed. 
\end{proof}

\section{Independence theorem}\label{indthm}
In this section we shall prove a generalization of Theorem~\ref{thm:main}. 

We follow the notation of \cite{ShiftedConvolution_LiorRudnick2014}. 
The cycle structure of a permutation $\sigma$ of $k$ letters is the partition 
$\lambda(\sigma) = (\lambda_1,\dots, \lambda_k)$ of $k$ if in the
decomposition of $\sigma$ as a product of disjoint cycles, there are
 $\lambda_j$ cycles of length $j$.

For each partition $\lambda \vdash k$, the probability that a random permutation on $k$ letters has cycle structure $\lambda$ is given by Cauchy's formula:  
\begin{equation}\label{def p}
p(\lambda) = \frac{\#\{\sigma\in S_k: \lambda(\sigma)=
\lambda\}}{\#S_k } = \prod_{j=1}^k \frac 1{j^{\lambda_j} \cdot \lambda_j!}.
\end{equation}

For $f\in \FF_q[t]$ of positive degree $k$, we say its cycle structure
is $\lambda(f) = (\lambda_1,\dots, \lambda_k)$ 
if in the prime decomposition $f=\prod_j P_j$ (we allow repetition), 
we have $\#\{i: \deg(P_i)=j\} = \lambda_j$. 

For a partition $\lambda\vdash k$, we let $\11_{\lambda}$ be the characteristic function of $f\in \mathcal{M}_n$ of cycle structure $\lambda$:
\begin{equation}
\11_\lambda(f)=\begin{cases} 1, & \lambda(f) = \lambda\\ 0, &\mbox{otherwise}.\end{cases}
\end{equation}
%

\begin{thm}\label{thm:general}
Let $B>0$ and $1>\epsilon> 0$ be fixed real numbers. Then the asymptotic formula
\[
\sum_{f\in I(f_0,\epsilon)} \11_{\lambda_1}(L_1(f)) \cdots \11_{\lambda_n}(L_n(f))
= p(\lambda_1) \cdots p(\lambda_n) \#I(f_0,\epsilon)\left(1+O_B\left(q^{-\frac{1}{2}}\right)\right)
\]
holds uniformly for all odd prime powers $q$, $1\leq n \leq B$, distinct primitive linear functions $L_1(X), \ldots, L_n(X)$ defined over $\FF_q[t]$ each of height at most $B$, monic $f_0\in \FF_q[t]$ of degree in the interval $B\geq \deg f_0 \geq \frac{2}{\epsilon}$, and partitions $\lambda_1, \cdots ,\lambda_n$ of $\deg(L_1(f_0)), \ldots, \deg (L_n(f_0))$, respectively.
\end{thm}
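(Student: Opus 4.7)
The plan is to parametrize the short interval $I(f_0,\epsilon)$ as a universal family of polynomials in $m+1$ affine coordinates, translate the joint factorization condition into a Frobenius conjugacy class condition, and apply an explicit function field Chebotarev estimate combined with the Galois computation of Proposition~\ref{p_productGalois}. Setting $m = \lfloor \epsilon \deg f_0 \rfloor$, the hypothesis $\deg f_0 \geq 2/\epsilon$ forces $m \geq 2$, and $\epsilon < 1$ gives $m < \deg f_0$, so the hypotheses of Proposition~\ref{p_productGalois} are satisfied. Every $f \in I(f_0,\epsilon)$ has the form $f = f_0 + \sum_{j=0}^m a_j t^j$ for a unique $\mathbf{a} = (a_0, \ldots, a_m) \in \FF_q^{m+1}$, so $\#I(f_0,\epsilon) = q^{m+1}$.

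For $\mathbf{a}$ in the complement $U \subseteq \mathbb{A}^{m+1}_{\FF_q}$ of the locus where some $L_i(f)$ has a repeated root or where two of the $L_i(f)$ share a root, the factorization types $\lambda(L_1(f)), \ldots, \lambda(L_n(f))$ are read off from the cycle decomposition of the Frobenius at $\mathbf{a}$ inside $\Gal\!\left(\prod_i L_i(f_0 + \sum_j A_j t^j),\,\FF_q(\mathbf{A})\right)$, viewed as a subgroup of $S_{k_1} \times \cdots \times S_{k_n}$, where $k_i = \deg L_i(f_0)$. Applying Proposition~\ref{p_productGalois} with $\FF = \overline{\FF}_q$ (permissible since $q$ is odd) gives that the geometric Galois group is the full product $G = S_{k_1} \times \cdots \times S_{k_n}$; this forces the arithmetic Galois group (trapped between the geometric one and the trivial upper bound $G$) also to equal $G$.

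An explicit Chebotarev estimate for the resulting étale cover of $U$, in the spirit of \cite{PrimePoly_Liors2014, ShiftedConvolution_LiorRudnick2014}, then yields
\[
\#\{\mathbf{a} \in U(\FF_q) : \lambda(L_i(f_{\mathbf{a}})) = \lambda_i,\ i=1,\ldots,n\}
= \frac{\#\{\sigma \in G : \lambda(\sigma_i) = \lambda_i\}}{\#G}\, q^{m+1} + O_B\!\left(q^{m+1/2}\right).
\]
Because $G$ is a direct product, the density factors as $\prod_{i=1}^n p(\lambda_i)$ by Cauchy's formula~\eqref{def p}. The complement of $U$ inside $\mathbb{A}^{m+1}$ is a hypersurface of degree bounded in terms of $B$, cut out by the product of the pairwise resultants and of the discriminants $d_i$ treated in \S\ref{s_discriminant} (which are nonzero there), so its $\FF_q$-points contribute $O_B(q^m)$, absorbed by the main error term. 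Dividing by $\#I(f_0,\epsilon) = q^{m+1}$ yields the asymptotic claimed in the theorem.

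The principal obstacle is establishing the Chebotarev bound with error $O_B(q^{m+1/2})$ \emph{uniformly} in $f_0$, in the $L_i$, in $n$, and in $q$ within the prescribed parameter ranges. Concretely, this amounts to controlling the degree of the cover, its ramification locus, and the genus of a one-dimensional slice of the associated étale cover of $\mathbb{A}^{m+1}$ in terms of $B$ alone, so that Deligne's Riemann hypothesis over bases of varying dimension produces a uniform power-saving error. The arithmetic--geometric comparison of Galois groups is, by contrast, essentially free, since Proposition~\ref{p_productGalois} is proved over \emph{any} algebraically closed field of characteristic different from $2$.
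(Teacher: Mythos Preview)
Your proposal is correct and follows essentially the same route as the paper: parametrize $I(f_0,\epsilon)$ by $\mathbb{A}^{m+1}$ with $m=\lfloor\epsilon\deg f_0\rfloor$, use Proposition~\ref{p_productGalois} to identify the geometric (hence arithmetic) Galois group with $S_{k_1}\times\cdots\times S_{k_n}$, apply an explicit Chebotarev estimate to the conjugacy class with prescribed cycle types, and discard the discriminant locus as $O_B(q^m)$. The only difference is one of packaging: the paper invokes a ready-made Chebotarev bound (Theorem~\ref{thm:cheb}, taken from \cite{ShiftedConvolution_LiorRudnick2014}) whose constant depends only on $m$ and $\mathrm{tot.deg}(\F)$, both bounded by $B$, so your ``principal obstacle'' of uniformity is already resolved by that citation rather than by the genus/ramification discussion you sketch.
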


Theorem~\ref{thm:main} is the special case of Theorem~\ref{thm:general} when taking all the $\lambda_i$'s to be the partition into one part. (Since then $\11 = \11_{\lambda_i}$ and $p(\lambda_i) = \frac{1}{\deg(L_i(f_0))}$.)

The proof of Theorem~\ref{thm:general} is in the same spirit as proofs of other results in the literature once one has the Galois group calculation (Proposition~\ref{p_productGalois}). In fact, it is nearly identical to the proof of \cite[Theorem~1.4]{ShiftedConvolution_LiorRudnick2014}. For the reader's convenience we bring here the full proof. 
\begin{thm}[{\cite[Theorem~3.1]{ShiftedConvolution_LiorRudnick2014}}] \label{thm:cheb} 
Let $\bfA=(A_0, \ldots, A_m)$ be an $(m+1)$-tuple of variables over $\FF_q$, let $\F(t) \in \FF_q[\bfA][t]$ be monic and separable in $t$, let $L$ be a splitting field of $\F$ over $K=\FF_q(\bfA)$, and let $G=\Gal(\F,K)=\Gal(L/K)$. Assume that $\FF_q$ is algebraically closed in $L$. 
Then there exists a constant $c=c(m,{\rm tot.deg} (\F))$ such that for every conjugacy class $C\subseteq G$ we have 
\[
\left|\# \{ \bfa\in \FF_q^{m+1} : \Fr_\bfa = C\} -\frac{|C|}{|G|} q^{m+1}\right|\leq c q^{m+1/2}.
\]
\end{thm}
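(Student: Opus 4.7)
The plan is to reduce Theorem~\ref{thm:cheb} to the effective form of Chebotarev's density theorem for étale Galois covers of schemes over $\FF_q$, which in turn rests on the Grothendieck-Lefschetz trace formula together with Deligne's Riemann hypothesis (Weil~II).

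First I would set up the geometry. Let $X = \AA^{m+1}_{\FF_q} = \mathrm{Spec}\,\FF_q[\bfA]$ and let $\pi\colon Y\to X$ be the normalization of $X$ in the splitting field $L$ of $\F$. Since $\F$ is separable in $t$, the discriminant $D = \disc_t(\F)\in \FF_q[\bfA]$ is nonzero, and over the open set $U = X\setminus V(D)$ the morphism $\pi$ restricts to an étale $G$-torsor $Y_U\to U$. The hypothesis that $\FF_q$ is algebraically closed in $L$ is precisely the condition that $Y_U$ is geometrically connected over $\FF_q$. At every $\bfa\in U(\FF_q)$ the Frobenius conjugacy class $\Fr_\bfa\subseteq G$ is then the usual one obtained from the action on a geometric fibre of $\pi$.

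Second, I would handle the two pieces separately. The ramified points $\bfa\in V(D)(\FF_q)$ contribute at most $C_1 q^m$ to either side, where $C_1$ depends only on $\deg D$, which in turn is bounded in terms of $m$ and $\mathrm{tot.deg}(\F)$ (Schwartz-Zippel). For the main term, I would apply effective Chebotarev to the étale Galois cover $Y_U\to U$: for each conjugacy class $C\subseteq G$,
\[
\left|\,\#\{\bfa\in U(\FF_q) : \Fr_\bfa = C\} - \frac{|C|}{|G|}\,\#U(\FF_q)\right| \leq C_2\, q^{m+\frac{1}{2}}.
\]
The proof of this inequality is the trace formula applied to the lisse $\bar\QQ_\ell$-sheaves on $U$ associated to the irreducible characters of $G$, combined with Deligne's weight bounds: for any nontrivial character the top compactly supported cohomology of the associated sheaf on $U_{\overline{\FF}_q}$ vanishes (since $Y_U$ is geometrically connected), while the remaining groups have Frobenius weights at most $2m+1$, producing the saving of $q^{1/2}$. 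Combining with the Lang-Weil estimate $\#U(\FF_q) = q^{m+1}+O(q^{m+1/2})$ and with the bound on the ramified locus from above then yields Theorem~\ref{thm:cheb}.

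The main obstacle---and the reason the statement is not entirely formal---is showing that $C_2$ can be chosen to depend only on $m$ and $\mathrm{tot.deg}(\F)$, uniformly in $\F$, $q$ and (crucially) in $G$, whose order is a priori only controlled through $\deg_t(\F)$. This uniformity follows from uniform bounds, due in this generality to Katz (refining Bombieri's bounds for trace sums), on the sum of the compactly supported $\ell$-adic Betti numbers of $U$ with coefficients in a lisse sheaf, in terms of the rank of the sheaf and the degrees of the polynomials cutting out $U$. Once these inputs are in place, summing the trace formula over conjugacy classes via character orthogonality on $G$ gives the stated bound.
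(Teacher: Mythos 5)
This theorem is not proved in the paper at all: it is imported verbatim from \cite[Theorem~3.1]{ShiftedConvolution_LiorRudnick2014}, and the authors explicitly defer to Appendix~A of that reference for the proof. So there is no in-paper argument to compare against; what can be assessed is whether your outline is a sound proof of the quoted statement.

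It is, and it follows the standard route that the cited appendix (and its predecessors, e.g.\ \cite{IrrVal_Lior2012}) also take: pass to the normalization $Y$ of $\AA^{m+1}$ in $L$, observe that over $U=\{\disc_t\F\neq 0\}$ the cover is finite \'etale Galois with group $G$ (here one should note that $S=R[\text{roots of }\F]$ because $R$ is regular and adjoining the roots gives an \'etale, hence normal, extension --- this is what makes $\Fr_{\bfa}$ the na\"{\i}ve Frobenius on the fibre), discard the $O_{\deg\disc_t\F}(q^m)$ ramified points, and prove an effective Chebotarev estimate on $U$ with square-root cancellation. You correctly identify the two essential points: the hypothesis that $\FF_q$ is algebraically closed in $L$ is exactly geometric connectedness of $Y_U$, which kills the top compactly supported cohomology of the sheaves attached to nontrivial irreducible characters of $G$ and hence produces the $q^{m+1/2}$ error term via Deligne's weight bounds; and the uniformity of the constant, which requires bounding sums of Betti numbers (Katz) purely in terms of $m$ and ${\rm tot.deg}(\F)$ --- noting that $|G|\leq (\deg_t\F)!$ is itself controlled by ${\rm tot.deg}(\F)$, so summing $\chi(1)^2$ over irreducible characters is harmless. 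The cited appendix phrases the main estimate as an explicit Chebotarev/Lang--Weil count on (twists of) the cover $Y_U$ rather than directly through $\ell$-adic sheaves, but the substance is the same and both give the stated dependence of $c$ on $m$ and ${\rm tot.deg}(\F)$ only. As a proof \emph{sketch} your proposal is correct; a complete write-up would need to supply the character-orthogonality identity expressing $\#\{\bfa : \Fr_{\bfa}=C\}$ as a sum of Frobenius traces and the precise Betti-number bound, but no idea is missing.
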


Here $\Fr_{\bfa}$ denotes the Frobenius conjugacy class $\left(\frac{S/R}{\phi}\right)$ in $G$ associated to the homomorphism  $\phi \colon R\to \FF_q$ given by $\bfA\mapsto \bfa\in \FF_q^{m+1}$, where $R=\FF_q[\bfA,\disc\F^{-1}]$ and $S$ is the integral closure of $R$ in the splitting field of $\F$. See \cite[Appendix A]{ShiftedConvolution_LiorRudnick2014} for more details and for a proof.

\begin{proof}[Proof of Theorem \ref{thm:general}.]
Let $\epsilon>0$ and $f_0$ be a monic polynomial of degree $k$ where $B\geq k \geq \frac{2}{\epsilon}$. Set $m=\lfloor\epsilon k \rfloor$ and let $f=f_0+\sum_{j=0}^{m}A_jt^j$. Note that $2\leq m<k$. Define $\F_i(\mathbf{A},t)=L_i(f)$ and $\F=\F_1\cdots\F_n$. Let $L$ be the splitting field of $\F$ over $K=\FF_q(\mathbf{A})$ and let $\FF$ be an algebraic closure of $\FF_q$. 
Let $G=\Gal(\F,K) = \Gal(L/K)$. 

By Proposition~\ref{p_productGalois}, 
\[
S_{k_1}\times \cdots \times S_{k_n}\cong \Gal(\FF L/\FF K)\cong\Gal(L/L\cap (\FF K)) \leq G,
\]
where $k_i = \deg(L_i(f_0))$.
On the other hand, the factorization $\F=\F_1\cdots\F_n$ implies that $G\leq S_{k_1}\times \cdots \times S_{k_n}$. So 
\begin{equation}\label{Eq:PP}
G=S_{k_1}\times \cdots \times S_{k_n},
\end{equation}
and $L\cap (\FF K)=K$.  It follows in particular that $L\cap \FF=K\cap \FF=\FF_q$. Hence, we may apply Theorem~\ref{thm:cheb} with the conjugacy class 
\[
C=\{ (\sigma_1, \ldots, \sigma_n)\in G : \lambda_{\sigma_i} = \lambda_i\}
\]
to get that 
\[
\left|\#\{\bfa\in \FF_{q}^{m+1} : \Fr_{\bfa} = C \} - |C|/|G| \cdot q^{m+1}\right|\leq c(B) q^{m+1/2}.
\]
We note that $|C|/|G| = p(\lambda_1) \cdots p(\lambda_n)$ and $\#\{\bfa\in \FF_{q}^{m+1} : \disc_t(\F)(\bfa)=0\} = O_{B}(q^{m})$. Also for $\bfa\in \FF_q^{m+1}$ with $\disc_t(\F(\bfa))\neq 0$ we have $\Fr_{\bfa} =C$ if and only if $\lambda_{\F_i(\bfa,t)}=\lambda_i$ for all $i=1, \ldots, n$ (see the proof of \cite[Theorem~3.1]{ShiftedConvolution_LiorRudnick2014} where this is shown explicitly).
Now,
\begin{align*}
&\sum_{f\in I(f_0,\epsilon)}\11_{\lambda_1}(L_1(f)) \cdots \11_{\lambda_n}(L_n(f)) \\
&\qquad = \#\{\bfa\in \FF_{q}^{m+1} : \lambda_{\F_i(\bfa,t)}=\lambda_i \text{ for all } i\}\\
	&\qquad = \#\{\bfa\in \FF_{q}^{m+1} : \disc_t(\F)(\bfa)\neq 0, \lambda_{\F_i(\bfa,t)}=\lambda_i \text{ for all } i\} + O_{B}(q^m)\\
	& \qquad = \#\{\bfa\in \FF_{q}^{m+1} : \Fr_{\bfa} = C \} + O_{B}(q^m)\\
	& \qquad = |C|/|G | q^{m+1} + O_{B}(q^{m+1/2}) \\
	& \qquad = p(\lambda_1) \cdots p(\lambda_n) q^{m+1} (1+O_B(q^{-1/2})).
\end{align*}
This finishes the proof, since $\#I(f_0,\epsilon) = q^{m+1}$.
\end{proof}

\section{A discussion on primes in short intervals and in arithmetic progressions}\label{s_heuristics}
In this section we provide heuristic for \eqref{PNT_AP}. Fix $1>\epsilon>0$.
A classical conjecture about primes in short intervals of the form $[x,x+x^{\epsilon}]$ asserts that
\begin{equation*}
\sum_{x\leq h\leq x+x^{\epsilon} }\11(h)\sim \int_{x}^{x+x^\epsilon} \frac{dt}{\log t}\sim \frac{x^{\epsilon}}{\log (x)}, \qquad x\to \infty.
\end{equation*}
Another classical conjecture, on the number of primes in arithmetic progressions, says that if  $0<a<b<x^{1-\delta}$, then
\begin{equation*}
\sum_{
\substack{0<h<x\\ h\equiv a\pmod b}
}\11(
h)\sim \frac{1}{\phi(b)}\cdot\frac{x}{\log(x)}, \qquad x\to \infty.
\end{equation*}
If $\epsilon+\delta \leq 1$, then the number of $h\equiv a\pmod b$ in $[x,x+x^{\epsilon}]$ is at most $1$. Therefore, 
to combine these two conjectures together, we must at least demand that $1<\epsilon+\delta$. Then it is natural to expect that: 

\begin{equation}\label{conj-comb}
\sum_{
\substack{x\leq h\leq x+u\\ h\equiv a\pmod b}
}\11(
h)\sim \frac{1}{\phi(b)}\cdot\frac{u}{\log(x)}, \qquad x\to \infty.
\end{equation}
for $0<a<b<x^{1-\delta}$ and $x^{\epsilon}\leq u\leq x$ where $\epsilon>0$.
%

In our setting it is more convenient to reformulate \eqref{conj-comb} as a statement on the mean value of $L(X)=a+bX$:
\begin{equation}\label{eq:conjbgeq0}
\begin{split}
\sum_{x\leq h\leq x+x^{\epsilon}}\11(L(h))
	&=\sum_{x\leq h\leq x+x^{\epsilon}}\11(a+bh)\\
	&=\sum_{\substack{a+y\leq \tilde{h}\leq a+y+b^{1-\epsilon}y^{\epsilon}\\\tilde{h}\equiv a\pmod b}}\11(\tilde{h})\\
	&\sim \frac{1}{\phi(b)}\cdot\frac{b^{1-\epsilon}y^\epsilon}{\log(y)} \sim
	\frac{1}{\phi(b)}\cdot\frac{bx^{\epsilon}}{\log(L(x))},
\end{split}
\end{equation}
where $y=bx$ and $b^{\delta}<x$ and $0<a<b$. 
(Note that when $b^{\delta}<x$, then  $b<y^{1-\delta'}$ for some $\delta'>0$, which implies that $y^{\epsilon} \leq b^{1-\epsilon}y^{\epsilon} \leq y$; therefore the prior to the last step is justified.)

Next we deal with the case where $b<0$ and $a>|b|$. By division with remainder, there are unique $0< a'<|b|$ and $0<r$ such that $a=a'+|b|r$. Thus $a+bh = a' +|b|(r-h)$. 
So, putting $h'=r-h$ and $L'(X)=a'+|b|X$, we get 
\begin{equation*}
\begin{split}
\sum_{x\leq h\leq x+x^{\epsilon}}\11(L(h))
			&=\sum_{r-x-x^{\epsilon}\leq h'\leq r-x}\11(L'(h'))\\
			&=\sum_{y\leq h' \leq y+x^\epsilon} \11(L'(h'))	\\		
			&\sim \frac{1}{\phi(|b|)}\cdot\frac{|b|x^{\epsilon}}{\log(L'(y))}\\
			&\sim \frac{1}{\phi(|b|)}\cdot\frac{|b|x^{\epsilon}}{\log(L(x))},
\end{split}
\end{equation*}
where $y=r-x-x^\epsilon$ and $|b|^{1+\delta}<a$ and $|b|x^{\alpha}<a<|b|x^{\beta}$ for $1<\alpha<\beta$.
(Note that when these conditions hold, $y^{\epsilon'}<x^{\epsilon}<y$ for $\epsilon'<\frac{\epsilon}{\beta}$ and $|b|^\delta <y$ since $y\sim \frac{a}{|b|}$; therefore the prior to the step is justified.) 

\bibliographystyle{plain}

\end{document}